\documentclass[portuges,12pt,letter]{article}
\usepackage[centertags]{amsmath}
\usepackage{amsfonts}
\usepackage{newlfont}
\usepackage{amscd}
\usepackage{graphics}
\usepackage{epsfig}
\usepackage{indentfirst}
\usepackage{amsxtra}
\usepackage[latin1]{inputenc}
\usepackage{amssymb, amsmath}
\usepackage{amsthm}
\usepackage[mathscr]{eucal}

\newtheorem{thm}{Theorem}[section]

\newtheorem{obe}[thm]{Remark}


\setlength{\textwidth}{18cm} \setlength{\textheight}{22cm}
\setlength{\topmargin}{-2cm} \setlength{\oddsidemargin}{-1cm}
\author{Fabio Silva Botelho \\ Department of Mathematics \\  Federal University of Santa Catarina, UFSC \\
Florian\'{o}polis, SC - Brazil}

\title{\bf  A primal dual formulation through a proximal approach for non-convex variational optimization}
\date{}
\begin{document}
\maketitle

\abstract{This article develops a primal dual formulation for a primal proximal approach suitable for a large class of non-convex models in the calculus of variations. The results are established through standard tools of functional analysis, convex analysis and duality theory and are applied to a Ginzburg-Landau type model. Finally, in the last two sections, we present concerning optimality conditions and another related duality principle for the model in question.}

\section{Introduction}
We start this article by justifying the suitability of the proximal approach for the concerning model.

Consider a domain $\Omega \subset \mathbb{R}^3$ and the functional $J:U \rightarrow \mathbb{R}$ where
\begin{eqnarray}J(u)&=& \frac{\gamma}{2}\int_\Omega \nabla u \cdot \nabla u\;dx +\frac{\alpha}{2}\int_\Omega (u^2-\beta)^2\;dx
\nonumber \\ && -\langle u,f \rangle_{L^2},\; \forall u \in U=W_0^{1,2}(\Omega).
\end{eqnarray}

We could write such a functional as
$$J(u)=G_1(u,0)+F_1(u),\; \forall u \in U,$$
where
$$G_1(u,v)=\frac{\gamma}{2}\int_\Omega \nabla u \cdot \nabla u\;dx +\frac{\alpha}{2}\int_\Omega (u^2-\beta+v)^2\;dx
-\frac{\varepsilon}{2}\int_\Omega u^2\;dx,$$
and
$$F_1(u)=\frac{\varepsilon}{2}\int_\Omega u^2\;dx-\langle u,f \rangle_{L^2}.$$

Among other possibilities, we could define the dual functional as

$$J^*(v^*,v_0^*)=-G^*_1(v^*,v_0^*)-F^*(v^*),$$

where $$G_1^*(v^*,v_0^*)= \frac{1}{2} \int_\Omega \frac{(v^*)^2}{(-\gamma \nabla^2+2v_0^*-\varepsilon)}\;dx
+\frac{1}{2\alpha}\int_\Omega (v_0^*)^2\;dx+\beta \int_\Omega v_0^*\;dx,$$
and
$$F^*_1(v^*)=\frac{1}{2 \varepsilon}\int_\Omega (v^*-f)^2\;dx$$

Through the variation in $v_0^*$ we obtain

$$\frac{(v^*)^2}{(-\gamma \nabla^2 +2 v_0^*-\varepsilon)^2}-\frac{v_0^*}{\alpha}-\beta=\mathbf{0},$$
intending to obtain conditions for a solution $v_0^*(v^*)$ and thus to obtain a final functional as a function of $v^*$ with a possible large region of convexity  (in fact concavity) due the term
$$F^*_1(v^*)=\frac{1}{2 \varepsilon}\int_\Omega (v^*-f)^2\;dx$$ with a small value for $\varepsilon>0$.

The issue is that if the term $$-\gamma \nabla^2+2v_0^* -\varepsilon$$ corresponds to an undefined matrix (this is a common situation for the case of local minima for the primal formulation) we may not have the hypothesis of the implicit function theorem satisfied so that critical points of the dual formulation may not correspond to critical points of the primal one and reciprocally.

Indeed, we may obtain for the second variation of $J^*$ in $v_0^*$
$$\frac{\partial^2 J^*(v_0^*)}{\partial (v_0^*)^2}=-4\frac{(v^*)^2}{(-\gamma \nabla^2+2v_0^*-\varepsilon)^3}-\frac{1}{\alpha},$$

Observe that for a critical point denoting $$u=\frac{(v^*)}{(-\gamma \nabla^2+2v_0^*-\varepsilon)}$$ we have
$$v_0^*=\alpha\left( \left(\frac{(v^*)}{(-\gamma \nabla^2+2v_0^*-\varepsilon)}\right)^2-\beta\right)=\alpha(u^2-\beta),$$
so that
$$\frac{\partial^2 J^*(v_0^*)}{\partial (v_0^*)^2}=-4\frac{u^2}{(-\gamma \nabla^2+2v_0^*-\varepsilon)}-\frac{1}{\alpha}$$
and thus
$$\frac{\partial^2 J^*(v_0^*)}{\partial (v_0^*)^2}=\frac{-4\alpha u^2+\gamma \nabla^2-2v_0^*+\varepsilon}{(-\gamma \nabla^2+2v_0^*-\varepsilon) \alpha} =\frac{-\delta^2J(u)+\varepsilon}{(-\gamma \nabla^2+2v_0^*-\varepsilon) \alpha}.$$

Therefore if for a critical point where $$\delta^2 J(u)-\varepsilon > \mathbf{0}$$the term
$$-\gamma \nabla^2+2v_0^*-\varepsilon$$ corresponds to an undefined matrix, we have that $$\frac{\partial^2 J^*(v_0^*)}{\partial (v_0^*)^2}$$ is also undefined and the hypothesis of the implicit function theorem may not be satisfied, in order to obtain $v_0^*(v^*)$. The other issue is that $$\delta^2 J(v^*,v_0^*)$$ may  also be undefined at a critical point, so that we do not have a qualitative correspondence between the primal and dual critical points.

So this may lead us, for a large class of similar models, through such a formulation, to wrong results concerning the equivalence of critical points for the primal and dual formulations.

In order to solve this problem, in this article we propose a kind of proximal variational formulation with exact penalization. Thus, with such facts in mind, we propose as the primal dual equivalent formulation for the original primal problem in question, the following functional $\hat{J}:U \times Y \rightarrow \mathbb{R},$ where
\begin{eqnarray}
\hat{J}(u,p)&=&\frac{\gamma}{2}\int_\Omega \nabla u \cdot \nabla u\;dx +\frac{\alpha}{2}\int_\Omega (u^2-\beta)^2\;dx
\nonumber \\ && +\frac{K}{2}\int_{\Omega}(u-p)^2\;dx-\langle u,f \rangle_{L^2}
\end{eqnarray}

We highlight the proximal term $$\frac{K}{2}\int_{\Omega}(u-p)^2\;dx$$ makes the primal formulation  convex in $u$ for appropriate values of $K>0$.

In the next section  we present the theoretical results for a duality principle concerning such a proximal formulation. We believe through an analysis of the proof of the next theorem the suitability of such a proximal
formulation will be clarified.

\begin{obe}  About the references, in our work we have been greatly influenced by the works of J.J. Telega and W.R. Bielski, in particular by \cite{85,2900}. The duality principle here developed  for the proximal approach is also inspired by the works  J.F. Toland \cite{12} and Ekeland and Temam \cite{[6]}.

Related problems are addressed in \cite{700, 120,360}. About the physics of the problem in question we would cite
\cite{100} and \cite{101}. Details on the Sobolev spaces involved may be found in \cite{1,700}.
\end{obe}

\begin{obe} Even though we have not relabeled the functionals and operators, we shall consider a finite dimensional approximation for the model in question, in a finite elements or finite differences context.

In such a finite elements or finite differences context, we emphasize that the  notation
$$\int_\Omega \frac{(v^*_1)^2}{-\gamma \nabla^2+K+\varepsilon}\;dx$$  stands for
$$\left\langle \left(-\gamma \nabla^2+K I_d+\varepsilon I_d\right)^{-1}v_1^*,v_1^* \right\rangle$$ where $I_d$ denotes the identity matrix in an appropriate finite dimensional approximate space.
\end{obe}
\begin{obe} Finally we highlight that for invertible $n \times n$ matrices or invertible linear operators $A$ and $B$  we have
$$-A^{-1}+B^{-1}= A^{-1}(B-A)B^{-1}$$ and sometimes, as the meaning is clear, we may simply denote
$$-A^{-1}+B^{-1}=\frac{B-A}{AB}.$$
\end{obe}
\section{The main duality principle}
In this section we present the main result in this article, which is summarized by the next theorem.

At this point we highlight that the optimality criterion presented in the item \ref{a3b3} in  the next theorem, namely $$-\gamma\nabla^2+2v_0^*>0,$$ may be found in  analogous form in the article \cite{15} which was published in 2010, but in fact submitted in August of the year 2007, as indicated in the concerning Journal web-site.  Related results on duality theory may be originally found in \cite{360}.

\begin{thm}\label{a11b11} Let $\Omega \subset \mathbb{R}^3$ be an open, bounded and connected set with a regular (Lipschitzian)
boundary denoted by $\partial \Omega$. Even though we have not relabeled the functionals and operators,  consider a finite dimensional approximation for the model in question, in a finite elements or finite differences context, where we define the functionals $\hat{J}: U \times Y \rightarrow \mathbb{R}$
and $J:U \rightarrow \mathbb{R}$,  by
\begin{eqnarray}
\hat{J}(u,p)&=&\frac{\gamma}{2}\int_\Omega \nabla u \cdot \nabla u\;dx +\frac{\alpha}{2}\int_\Omega (u^2-\beta)^2\;dx
\nonumber \\ && +\frac{K}{2}\int_{\Omega}(u-p)^2\;dx-\langle u,f \rangle_{L^2}
\end{eqnarray}
and $$J(u)=\hat{J}(u,u),$$
where
$$U=W_0^{1,2}(\Omega),$$
$$Y=Y^*=L^2(\Omega),$$
$\alpha>0,\beta>0,\gamma>0$, $K>0$ and $f \in C^1(\overline{\Omega}).$

Furthermore, for a sufficiently small parameter $\varepsilon>0$, define $G:U \times Y \times Y \rightarrow \mathbb{R}$ by
\begin{eqnarray}
G(u,v,p)&=&\frac{\gamma}{2}\int_\Omega \nabla u \cdot \nabla u\;dx +\frac{\alpha}{2}\int_\Omega (u^2-\beta+v)^2\;dx
\nonumber \\ && -\langle u,K p\rangle_{L^2} +\frac{K}{2}\int_\Omega u^2\;dx+\frac{\varepsilon}{2}\int_\Omega u^2\;dx,
\end{eqnarray}
 $F:U \rightarrow \mathbb{R}$ by
$$F(u)=\frac{\varepsilon}{2}\int_\Omega u^2\;dx+\langle u,f \rangle_{L^2}$$
and $H:Y \rightarrow \mathbb{R}$ by
$$H(p)=\frac{K}{2}\int_\Omega p^2\;dx,$$
so that
$$\hat{J}(u,p)=G(u,0,p)-F(u)+H(p),\; \forall (u,p) \in U \times Y.$$

Define also, $G^*:Y^*\times Y^* \times Y\rightarrow \mathbb{R}$ by
\begin{eqnarray}G^*(v^*,v_0^*,p)&=&\sup_{u \in U}\sup_{v \in Y}\{ \langle u,v^*\rangle_{L^2}+\langle v,v_0^*\rangle_{L^2}-G(u,v,p)\}
\nonumber \\ &=& \frac{1}{2}\int_\Omega \frac{(v^*+K p)^2}{-\gamma \nabla^2+2 v_0^*+K+\varepsilon}\;dx
\nonumber \\ && +\frac{1}{2 \alpha}\int_\Omega (v_0^*)^2\;dx+\beta \int_\Omega v_0^*\;dx,\end{eqnarray}
if $v_0^* \in B^*$ where
$$B^*=\left\{v_0^* \in Y^* \;:\;-\gamma \nabla^2+ 2v_0^*+K+\varepsilon > \frac{K}{2}\right\},$$
$F^*:Y^* \rightarrow \mathbb{R}$ where
\begin{eqnarray}F^*(v^*)&=&\sup_{ u \in Y}\{\langle u, v^* \rangle_{L^2}-F(u)\} \nonumber \\ &=&
\frac{1}{2 \varepsilon}\int_\Omega (v^*-f)^2\;dx.\end{eqnarray}
and $J^*: Y^*\times B^* \times Y \rightarrow \mathbb{R}$ by
$$J^*(v^*,v_0^*,p)=-G^*(v^*,v_0^*,p)+F^*(v^*)+H(p), \; \forall (v^*,v_0^*, p) \in Y^* \times B^* \times Y.$$

Under such hypotheses,

\begin{enumerate}
\item Assume $u_0 \in U$ is such that $\delta J(u_0)=\mathbf{0}$ and define
$$\hat{v}_0^*=\alpha (u_0^2-\beta),$$
$$\hat{v}^*=\varepsilon u_0+f,$$
$$\hat{p}=u_0$$

under such assumptions, $$\delta J^*(\hat{v}^*,\hat{v}_0^*, \hat{p})= \mathbf{0}.$$

\begin{enumerate}
\item\label{a1b1} Assume also $\delta^2J(u_0)> \mathbf{0}$ and $\hat{v}_0^* \in B^*$.
Under such additional hypotheses, there exist $r_1,r_2,r_3>0$ such that
\begin{eqnarray}
J(u_0)&=& \inf_{u \in B_{r_1}(u_0)} J(u) \nonumber \\ &=&
\inf_{v^* \in B_{r_3}(\hat{v}^*)}\left\{\inf_{p \in B_{r_2}(\hat{p})}\left\{\sup_{v_0^* \in B^*}J^*(v^*,v_0^*,p)\right\}\right\}
\nonumber \\ &=& J^*(\hat{v}^*,\hat{v}_0^*, \hat{p}).
\end{eqnarray}

Moreover, defining $J_3^*:B_{r_3}(\hat{v}^*) \rightarrow \mathbb{R}$ by
$$J_3^*(v^*)=\inf_{p \in B_{r_2}(\hat{p})}\left\{\sup_{v_0^* \in B^*}J^*(v^*,v_0^*,p)\right\}$$
we have that
$$\delta J^*_3(\hat{v}^*)=\mathbf{0}$$
$$\delta^2 J_3^*(\hat{v}^*) > \mathbf{0}$$ so that
\begin{eqnarray}
J(u_0)&=& \inf_{u \in B_{r_1}(u_0)}J(u) \nonumber \\ &=& \inf_{v^* \in B_{r_3}(\hat{v}^*)} J_3^*(v^*) \nonumber \\
&=& J_3^*(\hat{v}^*).
\end{eqnarray}
\item\label{a2b2} Suppose $\delta^2J(u_0)< \mathbf{0}$ and $\hat{v}_0^* \in B^*$.
Under such additional hypotheses, there exist $r_1,r_2,r_3>0$ such that
\begin{eqnarray}
J(u_0)&=& \sup_{u \in B_{r_1}(u_0)} J(u) \nonumber \\ &=&
\inf_{v^* \in B_{r_3}(\hat{v}^*)}\left\{\sup_{p \in B_{r_2}(\hat{p})}\left\{\sup_{v_0^* \in B^*}J^*(v^*,v_0^*,p)
\right\} \right\}
\nonumber \\ &=& J^*(\hat{v}^*,\hat{v}_0^*, \hat{p}).
\end{eqnarray}

Moreover, defining $J_5^*:B_{r_3}(\hat{v}^*) \rightarrow \mathbb{R}$ by
$$J_5^*(v^*)=\sup_{p \in B_{r_2}(\hat{p})}\left\{\sup_{v_0^* \in B^*}J^*(v^*,v_0^*,p)\right\}$$
we have
$$\delta J_5^*(\hat{v}^*)=\mathbf{0}$$
$$\delta^2 J_5^*(\hat{v}^*) >\mathbf{0}$$ so that
\begin{eqnarray}
J(u_0)&=& \sup_{u \in B_{r_1}(u_0)}J(u) \nonumber \\ &=& \inf_{v^* \in B_{r_3}(\hat{v}^*)} J_5^*(v^*) \nonumber \\
&=& J_5^*(\hat{v}^*).
\end{eqnarray}
\item\label{a3b3} For this item define $A^+$ by
$$A^+=\{v_0^* \in Y^*\;:\; -\gamma \nabla^2+2v_0^* >\mathbf{0}\}.$$
Assume $\hat{v}_0^* \in A^+ \cap B^*$.

Under such additional assumptions and definitions, we have
\begin{eqnarray}
J(u_0)&=& \inf_{u \in U} J(u) \nonumber \\ &=&
\inf_{(v^*,p) \in Y^* \times Y}\left\{\sup_{v_0^* \in A^+ \cap B^*}J^*(v^*,v_0^*,p)\right\}
\nonumber \\ &=& J^*(\hat{v}^*,\hat{v}_0^*, \hat{p}).
\end{eqnarray}

Moreover, defining $J_7^*:Y^*\times Y \rightarrow \mathbb{R}$ by
$$J_7^*(v^*,p)=\left\{\sup_{v_0^* \in A^+ \cap B^*}J^*(v^*,v_0^*,p)\right\}$$
we have
$$\delta J_7^*(\hat{v}^*,\hat{p})=\mathbf{0}$$
$$\delta^2 J_7^*(\hat{v}^*,\hat{p}) > \mathbf{0}$$ so that
\begin{eqnarray}
J(u_0)&=& \inf_{u \in U}J(u) \nonumber \\ &=& \inf_{(v^*,p) \in Y^* \times Y} J_7^*(v^*,p) \nonumber \\
&=& J_7^*(\hat{v}^*,\hat{p}).
\end{eqnarray}

\end{enumerate}
\end{enumerate}

\end{thm}
\begin{proof}
Suppose $u_0 \in U$ is such that $\delta J(u_0)=\mathbf{0}.$

We shall start by proving that $$\delta J^*(\hat{v}^*, \hat{v}_0^*, \hat{p})=\mathbf{0}.$$

Observe that from
$$\delta J(u_0)=0$$ we have that
$$-\gamma \nabla^2 u_0+2\alpha (u_0^2-\beta)u_0-f=0, \text{ in } \Omega,$$ so that
$$-\gamma \nabla^2 u_0+2\alpha (u_0^2-\beta)u_0-\varepsilon u_0+K u_0+\varepsilon u_0-K u_0-f=0,$$
that is
\begin{eqnarray}
\hat{v}^*+K \hat{p}=\varepsilon u_0+f+K u_0= -\gamma \nabla^2 u_0+2\alpha (u_0^2-\beta)u_0+\varepsilon u_0+K u_0.
\end{eqnarray}
Thus,
$$u_0 = \frac{\hat{v}^*+K \hat{p}}{-\gamma \nabla^2+2 \hat{v}_0^*+K+\varepsilon},$$ so that
$$u_0= \frac{\hat{v}^*-f}{\varepsilon}=\frac{\hat{v}^*+K \hat{p}}{-\gamma \nabla^2+2 \hat{v}_0^*+K+\varepsilon}.$$

Therefore $$ \frac{\hat{v}^*-f}{\varepsilon}-\frac{\hat{v}^*+K \hat{p}}{-\gamma \nabla^2+2 \hat{v}_0^*+K+\varepsilon}=\mathbf{0},$$ and consequently we may infer that
$$\frac{\partial  J^*(\hat{v}^*,\hat{v}_0^*, \hat{p})}{\partial v^*}=\mathbf{0}.$$
On the other hand
$$\frac{\hat{v}_0^*}{\alpha}=(u_0^2-\beta)=\left(\frac{\hat{v}^*+K \hat{p}}{-\gamma \nabla^2+2 \hat{v}_0^*+K+\varepsilon}\right)^2-\beta,$$ so that
$$-\frac{\hat{v}_0^*}{\alpha}+\left(\frac{\hat{v}^*+K \hat{p}}{-\gamma \nabla^2+2 \hat{v}_0^*+K+\varepsilon}\right)^2-\beta=0,$$  that is,
$$\frac{\partial  J^*(\hat{v}^*,\hat{v}_0^*, \hat{p})}{\partial \hat{v}^*_0}=\mathbf{0}.$$

Moreover
$$K \hat{p}=K u_0=K\left(\frac{\hat{v}^*+K \hat{p}}{-\gamma \nabla^2+2 \hat{v}_0^*+K+\varepsilon}\right),$$
so that
$$K \hat{p}-K\left(\frac{\hat{v}^*+K \hat{p}}{-\gamma \nabla^2+2 \hat{v}_0^*+K+\varepsilon}\right)=0,$$
that is,
$$\frac{\partial  J^*(\hat{v}^*,\hat{v}_0^*, \hat{p})}{\partial p}=\mathbf{0}.$$

From these last results, we have that
$$\delta J^*(\hat{v}^*,\hat{v}_0^*,\hat{p})=\mathbf{0}.$$

Also
\begin{eqnarray}
\frac{\partial J^*_3(\hat{v}^*)}{\partial v^*}&=&\frac{\partial  J^*(\hat{v}^*,\hat{v}_0^*, \hat{p})}{\partial v^*}
\nonumber \\ &&+\frac{\partial  J^*(\hat{v}^*,\hat{v}_0^*, \hat{p})}{\partial v^*_0}\frac{\partial \hat{v}^*_0}{\partial v^*} \nonumber \\ &&+ \frac{\partial  J^*(\hat{v}^*,\hat{v}_0^*, \hat{p})}{\partial p}\frac{\partial \hat{p}}{\partial v^*} \nonumber \\ &=& \mathbf{0}.
\end{eqnarray}

Similarly we may obtain
$$\frac{\partial J^*_5(\hat{v}^*)}{\partial v^*}=\mathbf{0},$$ and
$$\delta J^*_7(\hat{v}^*,\hat{p})=\mathbf{0}.$$

From the relations between the primal and dual variables, as a by-product of the Legendre transform proprieties we may obtain

\begin{eqnarray}
&&J^*(\hat{v}^*,\hat{v}_0^*,\hat{p}) \nonumber \\ &=&
-G^*(\hat{v}^*,\hat{v}_0^*,\hat{p})+F^*(\hat{v}^*)+H(\hat{p}) \nonumber \\ &=&
G(u_0,\mathbf{0},\hat{p})-F(u_0)+H(\hat{p})
\nonumber \\ &=& \hat{J}(u_0,\hat{p})\nonumber \\ &=& J(u_0).
\end{eqnarray}

Suppose now $$\delta^2 J(u_0)>\mathbf{0}.$$

Define $J_8^*: Y^*\times Y \rightarrow \mathbb{R}$ by
$$J_8^*(v^*,p)=\sup_{v_0^* \in B^*} J^*(v^*,v_0^*,p).$$

In particular we have got
$$J_8^*(\hat{v}^*,\hat{p})=\sup_{v_0^* \in B^*} J^*(\hat{v}^*,v_0^*,\hat{p})=J^*(\hat{v}^*,\hat{v}_0^*.\hat{p}).$$

Observe that
\begin{eqnarray}
\frac{\partial^2 J_8^*(\hat{v}^*, \hat{p})}{\partial p^2}&=&
\frac{\partial^2 J^*(\hat{v}^*,\hat{v}_0^*,\hat{p})}{\partial p^2} \nonumber \\ &&+
\frac{\partial^2 J^*(\hat{v}^*,\hat{v}_0^*,\hat{p})}{\partial p \partial v_0^*}\frac{\partial \hat{v}_0^*}{\partial p}.
\end{eqnarray}

At this point we recall that
$$\frac{\partial J^*(\hat{v}^*,\hat{v}_0^*,\hat{p})}{ \partial v_0^*}=\mathbf{0},$$ so that
$$\left(\frac{\hat{v}^*+K \hat{p}}{-\gamma \nabla^2+2 \hat{v}_0^*+K+\varepsilon}\right)^2-\frac{\hat{v}_0^*}{\alpha}
-\beta=\mathbf{0}$$

Hence, taking the variation in $p$ of such a last equation, we obtain
\begin{eqnarray}
&&\frac{2K(\hat{v}^*+K \hat{p})}{(-\gamma \nabla^2+2 \hat{v}_0^*+K+\varepsilon)^2}
\nonumber \\ &&-4\frac{(\hat{v}^*+K \hat{p})^2}{(-\gamma \nabla^2+2 \hat{v}_0^*+K+\varepsilon)^3}\frac{\partial \hat{v}_0^*}{\partial p} \nonumber \\ &&-\frac{1}{\alpha}\frac{\partial \hat{v}_0^*}{\partial p}=\mathbf{0}.
\end{eqnarray}
so that
\begin{eqnarray}
&&\frac{2Ku_0}{(-\gamma \nabla^2+2 \hat{v}_0^*+K+\varepsilon)}
\nonumber \\ &&-4\frac{(u_0)^2}{(-\gamma \nabla^2+2 \hat{v}_0^*+K+\varepsilon)}\frac{\partial \hat{v}_0^*}{\partial p} \nonumber \\ &&-\frac{1}{\alpha}\frac{\partial \hat{v}_0^*}{\partial p}=\mathbf{0}.
\end{eqnarray}
and thus
$$\frac{\partial \hat{v}_0^*}{\partial p}=\frac{2\alpha K u_0}{(-\gamma \nabla^2+4 \alpha u_0^2+2 \hat{v}_0^*+K+\varepsilon)}.$$

From this we have
\begin{eqnarray}
\frac{\partial^2 J_8^*(\hat{v}^*, \hat{p})}{\partial p^2}&=&
\frac{\partial^2 J^*(\hat{v}^*,\hat{v}_0^*,\hat{p})}{\partial p^2} \nonumber \\ &&+
\frac{\partial^2 J^*(\hat{v}^*,\hat{v}_0^*,\hat{p})}{\partial p \partial v_0^*}\frac{\partial \hat{v}_0^*}{\partial p}
\nonumber \\ &=& K-\frac{K^2}{(-\gamma \nabla^2+2 \hat{v}_0^*+K+\varepsilon)} \nonumber \\ &&
+\frac{2(\hat{v}^*+K\hat{p})K}{(-\gamma \nabla^2+2 \hat{v}_0^*+K+\varepsilon)^2}\frac{2\alpha K u_0}{(-\gamma \nabla^2+4 \alpha u_0^2+2 \hat{v}_0^*+K+\varepsilon)}.
\end{eqnarray}

Hence,
\begin{eqnarray}
\frac{\partial^2 J_8^*(\hat{v}^*, \hat{p})}{\partial p^2}&=&
K-\frac{K^2}{(-\gamma \nabla^2+2 \hat{v}_0^*+K+\varepsilon)} \nonumber \\ &&
+\frac{4 \alpha K^2 u_0^2 }{(-\gamma \nabla^2+2 \hat{v}_0^*+K+\varepsilon)}\frac{1}{(-\gamma \nabla^2+4 \alpha u_0^2+2 \hat{v}_0^*+K+\varepsilon)}
\end{eqnarray}
so that
\begin{eqnarray}
\frac{\partial^2 J_8^*(\hat{v}^*, \hat{p})}{\partial p^2}&=&
K-\frac{K^2}{(-\gamma \nabla^2+4\alpha u_0^2+2  \hat{v}_0^*+K+\varepsilon)} \nonumber \\ &=&
\frac{K (-\gamma \nabla^2+4\alpha u_0^2 +2 \hat{v}_0^*+\varepsilon)}{(-\gamma \nabla^2+4\alpha u_0^2+2  \hat{v}_0^*+K+\varepsilon)}  \nonumber \\ &=& K\frac{\delta^2 J(u_0)+\varepsilon}{(-\gamma \nabla^2+4\alpha u_0^2  +2\hat{v}_0^*+K+\varepsilon)} \nonumber \\ &>& \mathbf{0}.
\end{eqnarray}

Summarizing,
$$\frac{\partial^2 J_8^*(\hat{v}^*, \hat{p})}{\partial p^2}> \mathbf{0}.$$

Similarly,
\begin{eqnarray}
\frac{\partial^2 J_8^*(\hat{v}^*, \hat{p})}{\partial (v^*)^2}&=&
\frac{\partial^2 J^*(\hat{v}^*,\hat{v}_0^*,\hat{p})}{\partial (v^*)^2} \nonumber \\ &&+
\frac{\partial^2 J^*(\hat{v}^*,\hat{v}_0^*,\hat{p})}{\partial v^* \partial v_0^*}\frac{\partial \hat{v}_0^*}{\partial v^*}.
\end{eqnarray}

As above indicated,
$$\left(\frac{\hat{v}^*+K \hat{p}}{-\gamma \nabla^2+2 \hat{v}_0^*+K+\varepsilon}\right)^2-\frac{\hat{v}_0^*}{\alpha}
-\beta=\mathbf{0}$$

Hence, taking the variation in $v^*$ of such a last equation, we obtain
\begin{eqnarray}
&&\frac{2(\hat{v}^*+K \hat{p})}{(-\gamma \nabla^2+2 \hat{v}_0^*+K+\varepsilon)^2}
\nonumber \\ &&-4\frac{(\hat{v}^*+K \hat{p})^2}{(-\gamma \nabla^2+2 \hat{v}_0^*+K+\varepsilon)^3}\frac{\partial \hat{v}_0^*}{\partial v^*} \nonumber \\ &&-\frac{1}{\alpha}\frac{\partial \hat{v}_0^*}{\partial v^*}=\mathbf{0}.
\end{eqnarray}
so that
\begin{eqnarray}
&&\frac{2 u_0}{(-\gamma \nabla^2+2 \hat{v}_0^*+K+\varepsilon)}
\nonumber \\ &&-4\frac{(u_0)^2}{(-\gamma \nabla^2+2 \hat{v}_0^*+K+\varepsilon)}\frac{\partial v_0^*}{\partial v^*} \nonumber \\ &&-\frac{1}{\alpha}\frac{\partial \hat{v}_0^*}{\partial v^*}=\mathbf{0}.
\end{eqnarray}
so that
$$\frac{\partial \hat{v}_0^*}{\partial v^*}=\frac{2\alpha u_0}{(-\gamma \nabla^2+4 \alpha u_0^2+2 \hat{v}_0^*+K+\varepsilon)}.$$

From this we have
\begin{eqnarray}
\frac{\partial^2 J_8^*(\hat{v}^*, \hat{p})}{\partial (v^*)^2}&=&
\frac{\partial^2 J^*(\hat{v}^*,\hat{v}_0^*,\hat{p})}{\partial (v^*)^2} \nonumber \\ &&+
\frac{\partial^2 J^*(\hat{v}^*,\hat{v}_0^*,\hat{p})}{\partial v^* \partial v_0^*}\frac{\partial \hat{v}_0^*}{\partial v^*}
\nonumber \\ &=& \frac{1}{\varepsilon} -\frac{1}{(-\gamma \nabla^2+2 \hat{v}_0^*+K+\varepsilon)}\nonumber \\ &&
+\frac{2(\hat{v}^*+K\hat{p})}{[(-\gamma \nabla^2+2 \hat{v}_0^*+K+\varepsilon)^2]}\frac{2\alpha  u_0}{(-\gamma \nabla^2+4 \alpha u_0^2+2 \hat{v}_0^*+K+\varepsilon)}.
\end{eqnarray}

Hence,
\begin{eqnarray}
\frac{\partial^2 J_8^*(\hat{v}^*, \hat{p})}{\partial (v^*)^2}&=&
\frac{1}{\varepsilon}-\frac{1}{(-\gamma \nabla^2+2 \hat{v}_0^*+K+\varepsilon)} \nonumber \\ &&
+\frac{4 \alpha  u_0^2 }{(-\gamma \nabla^2+2 \hat{v}_0^*+K+\varepsilon)}\frac{1}{(-\gamma \nabla^2+4 \alpha u_0^2+2 \hat{v}_0^*+K+\varepsilon)}
\end{eqnarray}
so that
\begin{eqnarray}\label{a4b4}
\frac{\partial^2 J_8^*(\hat{v}^*, \hat{p})}{\partial (v^*)^2}&=&
\frac{1}{\varepsilon}-\frac{1}{(-\gamma \nabla^2+4\alpha u_0^2 +2 \hat{v}_0^*+K+\varepsilon)}
\nonumber \\ &>& \mathbf{0}.
\end{eqnarray}

Summarizing,
$$\frac{\partial^2 J_8^*(\hat{v}^*, \hat{p})}{\partial (v^*)^2}> \mathbf{0}.$$

Finally,
\begin{eqnarray}\label{a5b5}
\frac{\partial^2 J^*_3(\hat{v}^*)}{\partial (v^*)^2}&=&\frac{\partial^2  J^*(\hat{v}^*,\hat{v}_0^*, \hat{p})}{\partial (v^*)^2}
\nonumber \\ &&+\frac{\partial^2  J^*(\hat{v}^*,\hat{v}_0^*, \hat{p})}{\partial v^* \partial v^*_0}\frac{\partial \hat{v}^*_0}{\partial v^*} \nonumber \\ &&+\frac{\partial^2  J^*(\hat{v}^*,\hat{v}_0^*, \hat{p})}{\partial v^* \partial v^*_0}\frac{\partial \hat{v}^*_0}{\partial p}\frac{\partial \hat{p}}{\partial v^*} \nonumber \\ &&+ \frac{\partial^2  J^*(\hat{v}^*,\hat{v}_0^*, \hat{p})}{\partial v^* \partial p}\frac{\partial \hat{p}}{\partial v^*} \nonumber \\ &=& \frac{\partial^2 J_8^*(\hat{v}^*,\hat{v}_0^*, \hat{p})}{\partial (v^*)^2}\nonumber \\ &&+\frac{\partial^2  J^*(\hat{v}^*,\hat{v}_0^*, \hat{p})}{\partial v^* \partial v^*_0}\frac{\partial \hat{v}^*_0}{\partial p}\frac{\partial \hat{p}}{\partial v^*}\nonumber \\ &&+
 \frac{\partial^2  J^*(\hat{v}^*,\hat{v}_0^*, \hat{p})}{\partial v^* \partial p}\frac{\partial \hat{p}}{\partial v^*}
\end{eqnarray}
where from (\ref{a4b4}),
\begin{eqnarray}
\frac{\partial^2 J_8^*(\hat{v}^*, \hat{p})}{\partial (v^*)^2}&=&
\frac{1}{\varepsilon}-\frac{1}{(-\gamma \nabla^2+4\alpha u_0^2 +2 \hat{v}_0^*+K+\varepsilon)}
\nonumber \\ &>& \mathbf{0}.
\end{eqnarray}
From this and (\ref{a5b5}) we obtain
\begin{eqnarray}
\frac{\partial^2 J_3^*(\hat{v}^*)}{\partial (v^*)^2}&=&
\frac{1}{\varepsilon}-\frac{1}{(-\gamma \nabla^2+4\alpha u_0^2 +2 \hat{v}_0^*+K+\varepsilon)}
\nonumber \\ &&-\frac{4K \alpha u_0^2}{(-\gamma \nabla^2+4\alpha u_0^2 +2 \hat{v}_0^*+K+\varepsilon)} \frac{1}{(\delta^2J(u_0)+K+\varepsilon)}\frac{\partial \hat{p}}{\partial v^*}  \nonumber \\ &&
-\frac{K}{-\gamma \nabla^2+2\hat{v}_0^*+K+\varepsilon} \frac{\partial \hat{p}}{\partial v^*}.
\end{eqnarray}

However, from the variation of $J^*$ in $p$ we have
$$K\hat{p}-\frac{K(\hat{v}^*+K\hat{p})}{-\gamma \nabla^2+2\hat{v}_0^*+K+\varepsilon}=\mathbf{0},$$ so that
taking the variation in $v^*$ of this last equation, we get
\begin{eqnarray}&&K\frac{\partial \hat{p}}{\partial v^*}-\frac{K}{-\gamma \nabla^2+2\hat{v}_0^*+K+\varepsilon}
\nonumber \\ && -\frac{K^2}{-\gamma \nabla^2+2\hat{v}_0^*+K+\varepsilon}\frac{\partial \hat{p}}{\partial v^*}
\nonumber \\ &&+\frac{2(\hat{v}^*+K\hat{p})K}{(-\gamma \nabla^2+2\hat{v}_0^*+K+\varepsilon)^2}\left(\frac{\partial \hat{v}_0^*}{\partial v^*}+\frac{\partial \hat{v}_0^*}{\partial p}\frac{\partial \hat{p}}{\partial v^*}\right)=\mathbf{0},
\end{eqnarray}
so that
\begin{eqnarray}&&K\frac{\partial \hat{p}}{\partial v^*}-\frac{K}{-\gamma \nabla^2+2\hat{v}_0^*+K+\varepsilon}
\nonumber \\ && -\frac{K^2}{(-\gamma \nabla^2+2\hat{v}_0^*+K+\varepsilon)}\frac{\partial \hat{p}}{\partial v^*}
\nonumber \\ &&+\frac{4\alpha K^2 u_0^2}{(-\gamma \nabla^2+2\hat{v}_0^*+K+\varepsilon)}\frac{1}{(\delta^2J(u_0)+K+\varepsilon)}\frac{\partial \hat{p}}{\partial v^*}
\nonumber \\ && +\frac{4\alpha K u_0^2}{(-\gamma \nabla^2+2\hat{v}_0^*+K+\varepsilon)}\frac{1}{(\delta^2J(u_0)+K+\varepsilon)}\nonumber \\ &=& 0,
\end{eqnarray}
 Summarizing,
$$\frac{\partial \hat{p}}{\partial v^*}=\frac{1}{(\delta^2J(u_0)+\varepsilon)}$$
so that, considering that $K\gg \varepsilon$, we may obtain
\begin{eqnarray}\label{a10b10}
\frac{\partial^2 J_3^*(\hat{v}^*)}{\partial (v^*)^2}&=&
\frac{1}{\varepsilon} -\frac{1}{(\delta^2J(u_0)+K+\varepsilon)}\nonumber \\ &&+\frac{4K\alpha u_0^2}{(-\gamma \nabla^2 +2 \hat{v}_0^*+K+\varepsilon)} \frac{1}{(\delta^2J(u_0)+K+\varepsilon)}\frac{1}{(\delta^2J(u_0)+\varepsilon)}  \nonumber \\ &&
-\frac{K}{(-\gamma \nabla^2+2 \hat{v}_0^*+K+\varepsilon)}\frac{1}{(\delta^2J(u_0)+\varepsilon)}
\nonumber \\ &=&\frac{1}{\varepsilon}
-\frac{1}{(\delta^2 J(u_0)+K+\varepsilon)}\nonumber \\ &&-\frac{K}{(\delta^2J(u_0)+K+\varepsilon)} \frac{1}{(\delta^2J(u_0)+\varepsilon)}
\nonumber \\ &=& \frac{1}{\varepsilon}
-\frac{1}{(\delta^2 J(u_0)+\varepsilon)}\nonumber \\ &=&\mathcal{O}\left(\frac{1}{\varepsilon}\right)
\nonumber \\ &>& \mathbf{0},
\end{eqnarray}
in $B_{r_3}(\hat{v})$ for an appropriate not relabeled $r_3>0$, for a sufficiently small $\varepsilon>0.$

From such results, we may infer that there exist not relabeled $r_1,r_2,r_3>0$ such that
\begin{eqnarray}
J(u_0)&=& \inf_{u \in B_{r_1}(u_0)} J(u) \nonumber \\ &=&
\inf_{v^* \in B_{r_3}(\hat{v}^*)}\left\{\inf_{p \in B_{r_2}(\hat{p})}\left\{\sup_{v_0^* \in B^*}J^*(v^*,v_0^*,p)\right\}\right\}
\nonumber \\ &=& J^*(\hat{v}^*,\hat{v}_0^*, \hat{p}).
\end{eqnarray}

Moreover,
$$\delta J^*_3(\hat{v}^*)=\mathbf{0}$$
$$\delta^2 J_3^*(\hat{v}^*) > \mathbf{0}$$ so that
\begin{eqnarray}
J(u_0)&=& \inf_{u \in B_{r_1}(u_0)}J(u) \nonumber \\ &=& \inf_{v^* \in B_{r_3}(\hat{v}^*)} J_3^*(v^*) \nonumber \\
&=& J_3^*(\hat{v}^*).
\end{eqnarray}
The proof of the item (\ref{a1b1}) is complete.

For the item (\ref{a2b2}), suppose $u_0 \in U$ is such that $\delta J(u_0)=\mathbf{0}$ and
$$\delta^2J(u_0)< \mathbf{0}.$$

Similarly as obtained above we may get
$$\frac{\partial J_8^*(\hat{v}^*,\hat{p})}{\partial p^2}<\mathbf{0},$$
and
$$\frac{\partial^2 J_5^*(\hat{v}^*)}{\partial (v^*)^2}>\mathbf{0}.$$
Hence, there exist not relabeled real constants $r_1,r_2,r_3>0$ such that
\begin{eqnarray}
J(u_0)&=& \sup_{u \in B_{r_1}(u_0)} J(u) \nonumber \\ &=&
\inf_{v^* \in B_{r_3}(\hat{v}^*)}\left\{\sup_{p \in B_{r_2}(\hat{p})}\left\{\sup_{v_0^* \in B^*}J^*(v^*,v_0^*,p)
\right\} \right\}
\nonumber \\ &=& J^*(\hat{v}^*,\hat{v}_0^*, \hat{p}).
\end{eqnarray}

Moreover,
$$\delta J^*_5(\hat{v}^*)=\mathbf{0}$$
$$\delta^2 J_5^*(\hat{v}^*) >\mathbf{0}$$ so that
\begin{eqnarray}
J(u_0)&=& \sup_{u \in B_{r_1}(u_0)}J(u) \nonumber \\ &=& \inf_{v^* \in B_{r_3}(\hat{v}^*)} J^*_5(v^*) \nonumber \\
&=& J_5^*(\hat{v}^*).
\end{eqnarray}
The proof of the item (\ref{a2b2}) is complete.
For the item (\ref{a3b3}) we recall that
$$J_7^*:Y^* \times Y \rightarrow \mathbb{R}$$ is defined by
$$J_7^*(v^*,p)=\sup_{v_0^* \in A^+ \cap B^*} J^*(v^*,v_0^*,p).$$

Observe that through a direct computation we may obtain that the Hessian $$\left\{\frac{\partial^2 J^*(v^*,v_0^*,p)}{\partial v^* \partial p}\right\}$$ is positive definite in $Y^*\times (A^+\cap B^*) \times Y$ so that $J^*_7$ is convex  as the supremum of a family of convex functionals. Summarizing, we have got $$\delta^2 J_7^*(v^*,p)>\mathbf{0}$$ in $Y^* \times Y.$

From these results we may obtain
\begin{eqnarray}\label{a8b8}
J_7^*(\hat{v}^*,\hat{p})&=&\inf_{(v^*,p) \in Y^* \times Y} J_9^*(v^*,p)
\nonumber \\ &=&\inf_{(v^*,p) \in Y^* \times Y}\left\{ \sup_{v_0^* \in A^+\cap B^*} J^*(v^*,v_0^*,p)\right\}\nonumber \\ &=& J^*(\hat{v}^*, \hat{v}_0^*, \hat{p}) \nonumber \\ &=& J(u_0).
\end{eqnarray}

On the other hand
\begin{eqnarray}
J(u_0)&=& J^*(\hat{v}^*,\hat{v}_0^*,\hat{p}) \nonumber \\ &=&
-G^*(\hat{v}^*,\hat{v}_0^*, \hat{p})-F^*(\hat{v}^*)+H(\hat{p})
\nonumber \\ &=&  \inf_{(v^*,p) \in Y^* \times Y}\left\{\sup_{v_0^* \in A^+ \cap B^*} J^*(v^*,v_0^*,p)\right\}
\nonumber \\ &\leq& \left\{\sup_{v_0^* \in A^+\cap B^+}
\left\{ \frac{\gamma}{2}\int_\Omega \nabla u \cdot \nabla u\;dx+\int_\Omega v_0^* u^2\;dx \right.\right.
\nonumber \\ && -\frac{1}{2\alpha}\int_\Omega (v_0^*)^2\;dx-\beta \int_\Omega v_0^*\;dx \nonumber \\ &&
\left.\left.\frac{K+\varepsilon}{2}\int_\Omega u^2\;dx-\int_{\Omega} K p u \;dx+ \frac{K}{2}\int_{\Omega} p^2\;dx
-\langle u,v^* \rangle_{L^2}+F^*(v^*) \right\}\right\} \nonumber \\ &\leq&\left\{\sup_{v_0^* \in Y^*}
\left\{ \frac{\gamma}{2}\int_\Omega \nabla u \cdot \nabla u\;dx+\int_\Omega v_0^* u^2\;dx \right.\right.
\nonumber \\ && -\frac{1}{2\alpha}\int_\Omega (v_0^*)^2\;dx-\beta \int_\Omega v_0^*\;dx \nonumber \\ &&
\left.\left.\frac{K+\varepsilon}{2}\int_\Omega u^2\;dx-\int_{\Omega} K p u \;dx+ \frac{K}{2}\int_{\Omega} p^2\;dx
-\langle u,v^* \rangle_{L^2}+F^*(v^*) \right\}\right\} ,
\end{eqnarray}
$ \forall u \in U, p \in Y,\;v^* \in Y^*.$

From this, in particular for $v^*=\varepsilon u+f$ we may infer that
\begin{eqnarray}J(u_0)&\leq& \frac{\gamma}{2}\int_\Omega \nabla u \cdot \nabla u\;dx  \nonumber \\ &&
+\frac{\alpha}{2}\int_\Omega (u^2-\beta)^2\;dx+\frac{K}{2}\int_\Omega (u-p)^2\;dx
\nonumber \\ &&-\langle u,f \rangle_{L^2} \nonumber \\ &=& J(u,p),\; \forall u \in U, p \in Y.
\end{eqnarray}

Consequently, from  such a result and (\ref{a8b8}) we may infer that
\begin{eqnarray}
J(u_0)&=& \inf_{u \in U} J(u) \nonumber \\ &=&
\inf_{(v^*,p) \in Y^* \times Y}\left\{\sup_{v_0^* \in A^+ \cap B^*}J^*(v^*,v_0^*,p)\right\}
\nonumber \\ &=& J^*(\hat{v}^*,\hat{v}_0^*, \hat{p}).
\end{eqnarray}

Moreover, considering as previously indicated, that $J_7^*:C^* \rightarrow \mathbb{R}$ is defined by
$$J_7^*(v^*,p)=\left\{\sup_{v_0^* \in A^+ \cap B^*}J^*(v^*,v_0^*,p)\right\}$$
we get also
$$\delta J_7^*(\hat{v}^*,\hat{p})=\mathbf{0}$$
$$\delta^2 J_7^*(\hat{v}^*,\hat{p}) > \mathbf{0}$$ so that
\begin{eqnarray}
J(u_0)&=& \inf_{u \in U}J(u) \nonumber \\ &=& \inf_{(v^*,p) \in Y^* \times Y} J_7^*(v^*,p) \nonumber \\
&=& J_7^*(\hat{v}^*,\hat{p}).
\end{eqnarray}

The proof is complete.
\end{proof}
\section{A criterion for global optimality}

In this section we present a new concerning optimality criterion.

\begin{thm}\label{a12b12} Let $\Omega \subset \mathbb{R}^3$ be an open, bounded and connected set with a regular
(Lipschitzian) boundary denoted by $\partial \Omega.$

Consider the functionals $\hat{J}:U \times Y \rightarrow \mathbb{R}$ and $J:U \rightarrow \mathbb{R}$  where
\begin{eqnarray}J(u,p)&=&\frac{\gamma}{2}\int_\Omega \nabla u \cdot \nabla u \;dx
+ \frac{\alpha}{2}\int_\Omega (u^2-\beta)^2\;dx \nonumber \\ &&+\frac{K}{2}\int_\Omega (u-p)^2\;dx-\langle u,f \rangle_{L^2},
\end{eqnarray}
and $$J(u)=\hat{J}(u,u),\; \forall u \in U.$$
where $\alpha>0,$ $\beta>0$, $\gamma>0$ and $f \in C^1(\overline{\Omega}).$

Assume either $$f(x) \geq 0,\; \forall x \in \overline{\Omega}$$ or
$$f(x) \leq 0,\; \forall x \in \overline{\Omega}.$$

Suppose also, in a matrix sense $$-\gamma \nabla^2-2\alpha\beta \leq \mathbf{0},$$ assuming from now and on a finite dimensional approximation for the model in question, in a finite elements or finite differences context, even though the spaces, functionals and operators have not been relabeled.

Moreover define,
$$A^+=\{u \in U\;:\; u f \geq 0, \text{ in } \Omega\}$$

and
$$B^+=\{ u \in U\;:\; \delta^2J(u) \geq \mathbf{0}\}.$$

Under such hypotheses,
$$\inf_{u \in U} J(u)=\inf_{u \in A^+} J(u).$$

Furthermore, $$A^+ \cap B^+$$ is convex.
\end{thm}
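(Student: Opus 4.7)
The first claim, $\inf_U J = \inf_{A^+} J$, I would prove by symmetrization. The functional is
\[
J(u) = \frac{\gamma}{2}\int_\Omega |\nabla u|^2\,dx + \frac{\alpha}{2}\int_\Omega (u^2-\beta)^2\,dx - \langle u, f\rangle_{L^2},
\]
and its first two terms depend on $u$ only through $|\nabla u|$ and $u^2$, hence are invariant under $u \mapsto |u|$ by the standard $W^{1,2}$-identity $|\nabla|u||=|\nabla u|$ a.e. Only the linear term is affected by the sign of $u$, and when $f \geq 0$ pointwise one has $\langle |u|, f\rangle_{L^2} \geq \langle u, f\rangle_{L^2}$, giving $J(|u|) \leq J(u)$ with $|u| \in A^+$; the case $f \leq 0$ is handled symmetrically via $u \mapsto -|u|$. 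Passing to the infimum yields the claimed equality.

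For the convexity claims, first $A^+$ is convex because $u \mapsto uf$ is linear and $f$ has constant sign, so $u_1 f, u_2 f \geq 0$ implies $(\lambda u_1 + (1-\lambda) u_2) f \geq 0$. The Hessian at $u$ is
\[
\delta^2 J(u) = -\gamma\nabla^2 + 6\alpha u^2 - 2\alpha\beta,
\]
with $u^2$ acting by pointwise multiplication. To prove $A^+ \cap B^+$ is convex, I fix $u_1, u_2 \in A^+ \cap B^+$, set $u = \lambda u_1 + (1-\lambda) u_2$, and exploit the pointwise algebraic identity
\[
u^2 = \lambda u_1^2 + (1-\lambda) u_2^2 - \lambda(1-\lambda)(u_1-u_2)^2
\]
to decompose the Hessian as
\[
\delta^2 J(u) = \lambda\,\delta^2 J(u_1) + (1-\lambda)\,\delta^2 J(u_2) - 6\alpha\lambda(1-\lambda)(u_1-u_2)^2.
\]
The first two summands are a convex combination of nonnegative operators, so the remaining task is to show that the subtracted multiplication operator $6\alpha\lambda(1-\lambda)(u_1-u_2)^2$ is dominated by that combination.

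To absorb the correction I would combine two ingredients. First, the $A^+$ condition forces $u_1 u_2 \geq 0$ pointwise (both have the sign of $f$), giving the strict improvement
\[
(u_1-u_2)^2 = u_1^2 + u_2^2 - 2 u_1 u_2 \leq u_1^2 + u_2^2
\]
over the unconstrained pointwise bound. Second, the hypothesis $-\gamma\nabla^2 - 2\alpha\beta \leq \mathbf{0}$ is equivalent to $\gamma\nabla^2 + 2\alpha\beta \geq \mathbf{0}$, and combined with $\delta^2 J(u_i) \geq \mathbf{0}$ it forces the multiplicative reservoir $6\alpha u_i^2$ to carry the positivity budget in each $\delta^2 J(u_i)$. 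The plan is then to test the desired operator inequality against an arbitrary $\phi \in W_0^{1,2}(\Omega)$ (reduced to $\phi \geq 0$ by the Sobolev identity $|\nabla|\phi|| = |\nabla\phi|$) and regroup using the two ingredients above into a manifestly nonnegative combination.

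The main obstacle is precisely this operator absorption step. The subtracted term $(u_1 - u_2)^2$ may concentrate where $\lambda\,\delta^2 J(u_1) + (1-\lambda)\,\delta^2 J(u_2)$ is only marginally nonnegative, so a pointwise bound alone is insufficient; the argument must crucially use both the sign constraint $u_1 u_2 \geq 0$ from $A^+$ (which rules out easy counterexamples of the form $u,-u$) and the structural hypothesis on $-\gamma\nabla^2 - 2\alpha\beta$ to close the estimate.
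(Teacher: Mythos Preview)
Your treatment of the first claim is correct and matches the paper's: replace a near-minimizer $u$ by the function that equals $u$ where $uf\ge 0$ and $-u$ elsewhere (equivalently $|u|$ or $-|u|$), observe the gradient and double-well terms are unchanged, and the linear term can only improve.

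For the convexity of $A^+\cap B^+$, however, your plan has a genuine gap. The identity
\[
\delta^2 J(\lambda u_1+(1-\lambda)u_2)=\lambda\,\delta^2 J(u_1)+(1-\lambda)\,\delta^2 J(u_2)-6\alpha\lambda(1-\lambda)(u_1-u_2)^2
\]
is exact, so the ``absorption'' inequality you say remains to be proved is \emph{literally} the statement $\delta^2 J(\lambda u_1+(1-\lambda)u_2)\ge 0$ you are trying to establish; the decomposition by itself gains nothing. Your proposed way out, replacing $(u_1-u_2)^2$ by the larger quantity $u_1^2+u_2^2$, is too lossy: after that substitution the needed inequality becomes
\[
-\gamma\nabla^2-2\alpha\beta+6\alpha\bigl(\lambda^2 u_1^2+(1-\lambda)^2 u_2^2\bigr)\ge 0,
\]
and at $\lambda=\tfrac12$ the hypotheses $6\alpha u_i^2\ge \gamma\nabla^2+2\alpha\beta$ only yield $\tfrac{3\alpha}{2}(u_1^2+u_2^2)\ge \tfrac12(\gamma\nabla^2+2\alpha\beta)$, half of what is required. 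The cross term $2\lambda(1-\lambda)u_1u_2\ge 0$ that you discarded is precisely the piece that closes the gap, so throwing it away cannot work.

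The paper avoids this circularity by a different device. The hypothesis $-\gamma\nabla^2-2\alpha\beta\le 0$ is used to form the operator square root $\sqrt{\gamma\nabla^2+2\alpha\beta}$, and the Hessian condition is rewritten (in the finite-dimensional setting adopted there) as the \emph{linear} condition
\[
H(u):=\sqrt{6\alpha}\,|u|-\sqrt{\gamma\nabla^2+2\alpha\beta}\ge 0.
\]
Because $u_1,u_2\in A^+$ share the sign of $f$, one has $|\lambda u_1+(1-\lambda)u_2|=\lambda|u_1|+(1-\lambda)|u_2|$, and then
\[
H(\lambda u_1+(1-\lambda)u_2)=\lambda H(u_1)+(1-\lambda)H(u_2)\ge 0
\]
is immediate. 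This is where the structural hypothesis on $-\gamma\nabla^2-2\alpha\beta$ actually enters: it is what permits the square-root linearization, not an absorption estimate of the type you sketched.
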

\begin{proof} Define $$\alpha_1=\inf_{u \in U} J(u).$$

Let $\varepsilon>0$.

Thus we may obtain $u_\varepsilon \in U$ such that

$$\alpha_1 \leq J(u_\varepsilon)< \alpha_1+\varepsilon.$$

Define $v_\varepsilon \in A^+$ by

Define  \begin{equation}
v_\varepsilon(x)=\left \{
\begin{array}{ll}
 u_\varepsilon(x), &  \text{ if }\; u_\varepsilon(x)f(x) \geq 0,
 \\
 -u_\varepsilon(x), &  \text{ if }\; u_\varepsilon(x)f(x) < 0,
  \end{array} \right.\end{equation}
$\forall x \in \overline{\Omega}.$

Observe that \begin{eqnarray} J(v_\varepsilon)&=& \frac{\gamma}{2}\int_\Omega \nabla v_\varepsilon \cdot \nabla v_\varepsilon \;dx+\frac{\alpha}{2}\int_\Omega
(v_\varepsilon^2-\beta)^2\;dx \nonumber \\ && -\langle v_\varepsilon,f \rangle_{L^2}
\nonumber \\ &\leq& \frac{\gamma}{2}\int_\Omega \nabla u_\varepsilon \cdot \nabla u_\varepsilon \;dx+\frac{\alpha}{2}\int_\Omega
(u_\varepsilon^2-\beta)^2\;dx \nonumber \\ && -\langle u_\varepsilon,f \rangle_{L^2} \nonumber \\ &=&
J(u_\varepsilon).\end{eqnarray}

Hence $$\alpha_1 \leq J(v_\varepsilon) \leq J(u_\varepsilon) < \alpha_1+\varepsilon.$$

From this, since $v_\varepsilon \in A^+$,  we obtain
$$\alpha_1 \leq \inf_{u \in A^+}J(u) < \alpha_1+\varepsilon.$$

Since $\varepsilon>0$ is arbitrary, we may infer that
$$\inf_{u \in U}J(u)=\alpha_1=\inf_{ u \in A^+}J(u).$$

Finally, observe also that
$$\delta^2J(u)=-\gamma \nabla^2+6\alpha u^2-2\alpha\beta \geq \mathbf{0},$$ if, and only if
$$H(u)\geq \mathbf{0},$$
where
$$H(u)=\sqrt{6\alpha}|u|-\sqrt{\gamma\nabla^2+2\alpha\beta}\geq\mathbf{0}.$$
Hence, if $u_1,u_2 \in A^+ \cap B^+$ and $\lambda \in [0,1]$, then $$H(|u_1|)\geq \mathbf{0},$$
$$H(|u_2|)\geq \mathbf{0}$$ and also since $$\text{ sign } u_1=\text{ sign }u_2, \text{ in } \Omega,$$ we get
$$|\lambda u_1+(1-\lambda) u_2|=\lambda |u_1|+(1-\lambda)|u_2|,$$
so that,  $$H(|\lambda u_1+(1-\lambda)u_2|)=H(\lambda |u_1|+(1-\lambda)|u_2|)=\lambda H(|u_1|)+(1-\lambda)H(|u_2|)\geq \mathbf{0}$$ and thus, $$\delta^2J(\lambda u_1+(1-\lambda)u_2)\geq\mathbf{0}.$$

From this, we may infer that $A^+ \cap B^+$ is convex.

The proof is complete.
\end{proof}

\section{ Another related duality principle}

In this subsection we develop a duality principle concerning the last optimality criterion established.
 \begin{thm}\label{a14b14} Let $\Omega \subset \mathbb{R}^3$ be an open, bounded and connected set with a regular
(Lipschitzian) boundary denoted by $\partial \Omega.$

Consider the functionals $\hat{J}:U \times Y \rightarrow \mathbb{R}$ and $J:U \rightarrow \mathbb{R}$  where
\begin{eqnarray}J(u,p)&=&\frac{\gamma}{2}\int_\Omega \nabla u \cdot \nabla u \;dx
+ \frac{\alpha}{2}\int_\Omega (u^2-\beta)^2\;dx \nonumber \\ &&+\frac{K}{2}\int_\Omega (u-p)^2\;dx-\langle u,f \rangle_{L^2},
\end{eqnarray}
and $$J(u)=\hat{J}(u,u),\; \forall u \in U,$$
where $\alpha,\beta,\gamma$ are positive real constants,
$U=W_0^{1,2}(\Omega)$, $f \in C^1(\overline{\Omega})$ and we also denote $Y=Y^*=L^2(\Omega).$

Here we assume $$-\gamma \nabla^2-2 \alpha \beta \leq  \mathbf{0}$$
in an appropriate matrix sense considering, as above indicated, a finite dimensional not relabeled model approximation, in a finite differences or finite elements context.

Assume also either $$f(x)\geq 0,\; \forall x \in \overline{\Omega}$$ or $$f(x)\leq 0,\;\forall x \in \overline{\Omega}.$$

Define $G:U \times Y \rightarrow \mathbb{R}$ by
\begin{eqnarray}G(u,p)&=&\frac{\gamma}{2}\int_\Omega \nabla u \cdot \nabla u\;dx+\frac{\alpha}{2}\int_\Omega (u^2-\beta)^2\;dx \nonumber \\ &&+\frac{K+\varepsilon}{2} \int_\Omega u^2\;dx-\langle u,Kp\rangle_{L^2}\end{eqnarray}
 $F: U \rightarrow \mathbb{R}$ by
$$F(u)=\frac{\varepsilon}{2}\int_\Omega u^2\;dx-\langle u,f \rangle_{L^2}$$
and $H:Y \rightarrow \mathbb{R}$ by $$H(p)=\frac{K}{2}\int_\Omega p^2\;dx.$$
so that
$$\hat{J}(u,p)=G(u,p)-F(u)+H(p)$$

Furthermore, define $G^*: Y^* \times Y \rightarrow \mathbb{R}$ by
$$G^*(v^*+Kp)=\sup_{ u \in U}\{\langle u,v^* \rangle_{L^2} -G(u,p)\},$$
$F^*:Y^* \rightarrow \mathbb{R}$ by
\begin{eqnarray}F^*(v^*)&=&\sup_{u \in U}\{\langle u,v^*\rangle_{L^2}-F(u)\} \nonumber \\ &=& \frac{1}{2\varepsilon}\int_\Omega (v^*-f)^2\;dx.\end{eqnarray}
and $J^*:Y^* \times Y \rightarrow \mathbb{R}$ as
$$J^*(v^*,p)=-G^*(v^*+Kp)+F^*(v^*)+H(p).$$

Define also,
$$A^+=\{u \in U \;:\; u f \geq 0, \text{ in } \overline{\Omega}\},$$
$$B^+=\{u \in U\;:\; \delta^2J(u)\geq\mathbf{0}\},$$
$$E=A^+ \cap B^+,$$
Moreover, define
$$\hat{v}_0^*=\alpha (u_0^2-\beta),$$
$$\hat{v}^*=\varepsilon u_0+f,$$
$$\hat{p}=u_0,$$
and assume $u_0 \in U$ is such that $\delta J(u_0)=\mathbf{0},$ and
 $$u_0 \in E,$$
Under such hypothesis, assuming also $\hat{v}_0^* \in B^*$ we have

\begin{eqnarray}
J(u_0)&=& \inf_{u \in E}J(u) \nonumber \\ &=& \inf_{u \in U}J(u)
\nonumber \\ &=&  \inf_{(v^* ,p) \in Y^* \times Y} J^*(v^*,p)
\nonumber \\ &=& J^*(\hat{v}^*,\hat{p}).
\end{eqnarray}
\end{thm}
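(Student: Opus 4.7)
The plan is to establish the four asserted equalities left to right: first reduce the primal minimization to the convex set $E$ by invoking Theorem \ref{a12b12}, then apply a Toland-type duality to the decomposition $\hat{J} = G - F + H$, and finally check Fenchel--Young equality at the critical point to identify the dual value.

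For the two primal equalities $J(u_0) = \inf_{u\in E} J(u) = \inf_{u\in U} J(u)$, I would proceed as follows. Theorem \ref{a12b12} already gives $\inf_U J = \inf_{A^+} J$ and the convexity of $E = A^+ \cap B^+$. In the finite-dimensional approximation $J$ attains its infimum at some $\bar u$, which as a global minimizer satisfies $\delta J(\bar u)=\mathbf{0}$ and $\delta^2 J(\bar u) \ge \mathbf{0}$, hence $\bar u \in B^+$. The sign-reflection construction used in the proof of Theorem \ref{a12b12} produces $\bar v \in A^+$ with $\bar v^2 = \bar u^2$, so $\delta^2 J(\bar v) = \delta^2 J(\bar u) \ge \mathbf{0}$ and $J(\bar v) \le J(\bar u)$; thus $\bar v \in E$ attains the global infimum. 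Since $\delta^2 J \ge \mathbf{0}$ on the convex set $E$, $J$ is convex on $E$, so the critical point $u_0 \in E$ is a global minimizer of $J$ on $E$ and therefore on $U$ as well.

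For the duality identity $\inf_U J = \inf_{(v^*,p)} J^*(v^*,p)$, the key observation is that the proximal term forces $\min_p \hat{J}(u,p) = \hat{J}(u,u) = J(u)$, so $\inf_U J(u) = \inf_{(u,p)} \hat{J}(u,p)$. For $p$ fixed, the decomposition $\hat{J}(\cdot,p) = G(\cdot,p) - F + H(p)$ expresses a difference of convex functions in $u$: since $\delta^2_u G = -\gamma \nabla^2 + 6\alpha u^2 - 2\alpha\beta + K + \varepsilon$, the hypothesis $-\gamma\nabla^2 - 2\alpha\beta \le \mathbf{0}$ together with $6\alpha u^2 \ge 0$ and the finite-dimensional spectral bound on $-\gamma\nabla^2$ ensures $\delta^2_u G > \mathbf{0}$ for $K$ sufficiently large, so $G(\cdot,p)$ is strongly convex. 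Toland's duality in the Ekeland--Temam form then yields $\inf_u[G(u,p) - F(u)] = \inf_{v^*}[F^*(v^*) - G^*(v^* + Kp)]$; adding $H(p)$ and infimizing in $p$ produces the required identity.

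To identify the attainment $J(u_0) = J^*(\hat v^*,\hat p)$, I would verify the two Fenchel--Young equalities at $u_0$. The critical-point equation $-\gamma\nabla^2 u_0 + 2\alpha(u_0^2-\beta) u_0 = f$ can be rearranged (adding and subtracting $(K+\varepsilon) u_0$) to give $\partial_u G(u_0,\hat p) = \hat v^* + K\hat p$ and $F'(u_0) = \hat v^*$, so $G(u_0,\hat p) + G^*(\hat v^* + K\hat p) = \langle u_0,\hat v^* + K\hat p\rangle_{L^2}$ and $F(u_0) + F^*(\hat v^*) = \langle u_0,\hat v^*\rangle_{L^2}$. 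Subtracting, adding $H(\hat p) = H(u_0)$, and using $\hat p = u_0$ (so the proximal cross-term cancels) gives $J^*(\hat v^*,\hat p) = \hat J(u_0,u_0) = J(u_0)$. The main obstacle I expect is cleanly justifying the strong convexity of $G(\cdot,p)$ in the precise generality claimed, which is exactly the raison d'\^etre of the proximal parameter $K$ being chosen large; once that is in place, the Toland duality step is standard, and the Fenchel--Young identification is a direct computation using the critical-point equation.
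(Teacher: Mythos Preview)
Your proposal is correct and reaches the same conclusion as the paper, but the route differs in two places worth noting.

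First, the paper does \emph{not} invoke Toland duality to obtain $\inf_{U}J=\inf_{(v^*,p)}J^*$. It simply proves the one-sided (weak) inequality
\[
\inf_{u\in U}J(u)\;\le\;\hat J(u,p)=G(u,p)-F(u)+H(p)\;\le\;-\langle u,v^*\rangle+G(u,p)+F^*(v^*)+H(p),
\]
takes the infimum in $u$ to get $\inf_U J\le J^*(v^*,p)$ for every $(v^*,p)$, and then closes the gap by the attainment identity $J^*(\hat v^*,\hat p)=J(u_0)$ that you also verify. This is more elementary than your Toland step and, in particular, requires no convexity of $G(\cdot,p)$. Your worry about making $G(\cdot,p)$ convex is therefore unnecessary: Toland's identity $\inf_u[G-F]=\inf_{v^*}[F^*-G^*]$ only needs the \emph{subtracted} function $F$ to satisfy $F=F^{**}$, which holds here since $F$ is a convex quadratic. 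So your ``main obstacle'' is not an obstacle at all.

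Second, your treatment of the primal side is actually more complete than the paper's. The paper merely writes ``From Theorem~\ref{a12b12} we have that $\alpha_1=J(u_0)=\inf_U J=\inf_E J$,'' whereas Theorem~\ref{a12b12} only gives $\inf_U J=\inf_{A^+}J$ and the convexity of $E=A^+\cap B^+$. Your argument---that a global minimizer lies in $B^+$, that its sign-reflection lies in $E$, and that $J$ is convex on the convex set $E$ so the critical point $u_0\in E$ with $\delta J(u_0)=\mathbf 0$ must be the minimizer there---supplies exactly the missing link. In short: the paper's duality step is shorter than yours, but your primal-optimality step is the one that is fully justified.
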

\begin{proof} Define $$\alpha_1=\inf_{u \in U} J(u).$$

Hence
\begin{eqnarray}
\alpha_1 &\leq& J(u,p) \nonumber \\ &=& G(u,p)-F(u)+H(p)\nonumber \\ &\leq&
-\langle u,v^*\rangle_{L^2}+G(u,p)+H(p) \nonumber \\ && +\sup_{u \in U}\{\langle u,v^*\rangle_{L^2}-F(u)\}
\nonumber \\ &=& -\langle u,v^*\rangle_{L^2}+G(u,p)+H(p) +F^*(v^*)\end{eqnarray}
$\forall u \in U, v^* \in Y^*,\; p \in Y.$

Thus,
\begin{eqnarray}
\alpha_1  &\leq&
\inf_{u \in U}\{-\langle u,v^*\rangle_{L^2}+G(u,p)\}+H(p)+F^*(v^*)
\nonumber \\ &=& \-G^*(v^*+Kp)+F^*(v^*)+H(p)\end{eqnarray}
$\forall v^* \in Y^*,\; p \in Y.$
 Summarizing  \begin{equation}\label{a18b18}\alpha_1=\inf_{u \in U} J(u) \leq \inf_{(v^*,p) \in Y^*\times Y}J^*(v^*,p).\end{equation}

 From Theorem \ref{a12b12} we have that $$\alpha_1=J(u_0)=\inf_{u \in U} J(u)=\inf_{u \in E}J(u).$$

 Similarly as in the proof of Theorem \ref{a11b11} we may obtain
 $$\delta J^*(\hat{v}^*,p)=\mathbf{0}$$ and $$J^*(\hat{v}^*,\hat{p})=\hat{J}(u_0,\hat{p})=\hat{J}(u_0,u_0)=J(u_0).$$

 From this and (\ref{a18b18}) we may infer that
 \begin{eqnarray}
J(u_0)&=& \inf_{u \in E}J(u) \nonumber \\ &=& \inf_{u \in U}J(u)
\nonumber \\ &=&  \inf_{(v^* ,p) \in Y^* \times Y} J^*(v^*,p)
\nonumber \\ &=& J^*(\hat{v}^*,\hat{p}).
\end{eqnarray}

The proof is complete.
\end{proof}
\section{A  convex dual variational formulation}

Let $\Omega$ be an open, bounded and connected set with a regular (Lipschitzian) boundary denoted by $\partial \Omega$.

In this section we define $G:U \rightarrow \mathbb{R}$ by

\begin{eqnarray}
G(u)&=&\frac{\gamma}{2}\int_\Omega \nabla u \cdot \nabla u \;dx \nonumber \\ &&
+\frac{K}{2}\int_\Omega u^2\;dx- \langle u,f\rangle_{L^2},
\end{eqnarray}
and
$F:U \rightarrow \mathbb{R}$ by
\begin{eqnarray}
F(u)&=& -\frac{\alpha}{2}\int_\Omega (u^2-\beta)^2\;dx
\nonumber \\ &&+\frac{K}{2}\int_\Omega u^2\;dx,
\end{eqnarray}
where
$\alpha,\beta,\gamma>0,\; f \in L^2(\Omega) \text{ and } U=W_0^{1,2}(\Omega).$

Moreover we define
$$U_1=\{ u \in U\;:\;\|u\|_\infty \leq \sqrt[4]{K}\},$$
where $K>0$ is such that $G$ and $F$ are convex in $U_1$.

Define also
$$B^+=\{u \in U \;:\; \delta^2J(u) \geq \mathbf{0}\}$$
where $J:U \rightarrow \mathbb{R}$ is given by
\begin{eqnarray}
J(u)&=& G(u)-F(u) \nonumber \\ &=&
\frac{\gamma}{2}\int_\Omega \nabla u \cdot \nabla u \;dx \nonumber \\ &&
+\frac{\alpha}{2}\int_\Omega (u^2-\beta)^2\;dx- \langle u,f\rangle_{L^2}.
\end{eqnarray}

Finally, define $$D^*=\{v^* \in Y^*=L^2(\Omega)\;:\; \|v^*\|_\infty  \leq 3K\},$$
$G^*:D^*\rightarrow \mathbb{R}$ by
\begin{eqnarray}
G^*(v^*)&=& \sup_{ u \in U_1}\{ \langle u,v^* \rangle_{L^2}-G(u)\} \nonumber \\ &=&
\frac{1}{2}\int_\Omega \frac{(v^*+f)^2}{(-\gamma \nabla^2+K)}\;dx,
\end{eqnarray}
$F^*:D^* \rightarrow \mathbb{R}$ by
$$F^*(v^*)=\sup_{u \in U_1 \cap B^+} \{\langle u,v^* \rangle_{L^2}-F(u) \}$$
and $J^*:D^* \rightarrow \mathbb{R}$ by
$$J^*(v^*)=-G^*(v^*)+F^*(v^*)$$
Assume now either $$f(x)>0, \forall x \in \overline{\Omega}$$
or
$$f(x)<0, \forall x \in \overline{\Omega}.$$

Define $$A^+=\{u \in U_1\;:\; u\;f >0 \text{ in } \Omega\}$$
and define also $$D_1^*=\left\{v^* \in D^* \;:\; \hat{u}=\frac{\partial F_1^*(v^*)}{\partial v^*} \in A^+\right\},$$
where $$F_1^*(v^*)=\sup_{u \in U}\{ \langle u,v^*\rangle_{L^2}-F(u)\}.$$
\begin{thm} Under the hypotheses state above $J^*$ is convex on $D_1^*$.
\end{thm}
\begin{proof}
Let $v^* \in D^*_1$.

Thus
\begin{eqnarray}
F^*(v^*)&=& \sup_{u \in U_1 \cap B^+}\{\langle u,v^* \rangle_{L^2}-F(u) \}
\nonumber \\ &=& \sup_{u \in U}\{\langle u,v^* \rangle_{L^2}-F(u) \nonumber \\ &&
+\gamma\int_\Omega\nabla \varphi \cdot \nabla \varphi \;dx+6\alpha\int_\Omega u^2\varphi^2 \;dx
\nonumber \\ &&-2\alpha\beta \int_\Omega \varphi^2\;dx-\int_\Omega \varphi_1^2(u^2-\sqrt[2]{K})\;dx\},
\end{eqnarray}
for some appropriate Lagrange multipliers $(\varphi,\varphi_1) \in W^{1,2}(\Omega) \times L^2(\Omega).$

The last supremum is attained for some $\hat{u} \in U$ such that
$$v^*-\frac{\partial F(\hat{u})}{\partial u} + \varphi^2 (12)\alpha \hat{u}-\varphi_1^2 (2 \hat{u})=0, \text{ in } \Omega.$$

Taking the variation in $v^*$ in this last equation, we get
\begin{eqnarray}
&&\frac{\partial v^*}{\partial v^*}-\frac{\partial^2 F(\hat{u})}{\partial u^2}\frac{\partial \hat{u}}{\partial v^*}
\nonumber \\ && +\varphi^2(12 \alpha)\frac{\partial \hat{u}}{\partial v^*}-\varphi_1^22 \frac{\partial \hat{u}}{\partial v^*} \nonumber \\ &&+24\alpha \varphi \partial_{v^*} \varphi \hat{u}-12 \varphi_1 \partial_{v^*} \varphi_1 \hat{u}=0, \text{ in } \Omega.\end{eqnarray}

On the other hand we have the following necessary condition to be satisfied
\begin{eqnarray}
&&\gamma\int_\Omega\nabla \varphi \cdot \nabla \varphi \;dx+6\alpha\int_\Omega u^2\varphi^2 \;dx
\nonumber \\ &&-2\alpha\beta \int_\Omega \varphi^2\;dx=0,
\end{eqnarray}
so that
$$2\varphi \partial_u \varphi(-\gamma\nabla^2+6\alpha \hat{u}^2-2\alpha \beta)+\varphi^2(12\alpha \hat{u})=0$$
$\text{ in } \Omega$ so that
$$\varphi^2(12\alpha \hat{u})=0
\text{ in } \Omega.$$

And also, we must have
$$\int_\Omega \varphi_1^2(u^2-\sqrt[2]{K})\;dx=0,$$
so that
$$2 \varphi_1 \partial_u \varphi_1(\hat{u}^2-\sqrt[2]{K})+2\varphi_1^2 \hat{u}=0, \text{ in } \Omega,$$ and thus
$$2\varphi_1^2 \hat{u}=0, \text{ in } \Omega.$$

Since $v^* \in D^*$ we may assume $\varphi_1=0$ and thus
\begin{eqnarray}
&&\frac{\partial v^*}{\partial v^*}-\frac{\partial^2 F(\hat{u})}{\partial u^2}\frac{\partial \hat{u}}{\partial v^*}
\nonumber \\ && +\varphi^2(12 \alpha)\frac{\partial \hat{u}}{\partial v^*}-\varphi_1^22 \frac{\partial \hat{u}}{\partial v^*} \nonumber \\ &&+24\alpha \varphi \partial_{v^*} \varphi \hat{u}-12 \varphi_1 \partial_{v^*} \varphi_1 \hat{u} \nonumber \\ &=& 1-\frac{\partial^2 F(\hat{u})}{\partial u^2}\frac{\partial \hat{u}}{\partial v^*}
\nonumber \\ && +\varphi^2(12 \alpha)\frac{\partial \hat{u}}{\partial v^*}=0, \text{ in } \Omega.\end{eqnarray}

Therefore,
\begin{eqnarray}
\frac{\partial \hat{u}}{\partial v^*} &=& \frac{1}{\frac{\partial^2F(\hat{u})}{\partial u^2}-12\alpha \varphi^2}
\nonumber \\ &=& \frac{1}{-6 \alpha \hat{u}^2+2\alpha \beta-12\alpha \varphi^2+K} \nonumber \\ &>& \mathbf{0}.
\end{eqnarray}

On the other hand

\begin{eqnarray}
F^*(v^*)&=& \langle \hat{u},v^* \rangle_{L^2}-F(\hat{u}) \nonumber \\ &&
+\gamma\int_\Omega\nabla \varphi \cdot \nabla \varphi \;dx+\int_\Omega 6\alpha \hat{u}^2\varphi^2 \;dx
\nonumber \\ &&-2\alpha\beta \int_\Omega \varphi^2\;dx-\int_\Omega \varphi_1^2(\hat{u}^2-\sqrt[2]{K})\;dx,
\end{eqnarray}
Hence
\begin{eqnarray}
\frac{\partial F^*(v^*)}{\partial v^*} &=& \hat{u}+\left(v^*-\frac{\partial F(\hat{u})}{\partial u} \right.
\nonumber \\ &&\left. +\varphi^212 \alpha \hat{u}-\varphi_1^2(2\hat{u})\right) \frac{\partial \hat{u}}{\partial v^*}
\nonumber \\ && 2\varphi  \partial_{v^*} \varphi (-\gamma \nabla^2+6\alpha \hat{u}^2-2\alpha\beta)
\nonumber \\ &&-2\varphi_1 \partial_{v^*} \varphi_1 (\hat{u}^2-\sqrt[2]{K}) \nonumber \\ &=& \hat{u}.\end{eqnarray}
Therefore, we may infer that
\begin{eqnarray}
\frac{\partial^2 F^*(v^*)}{\partial (v^*)^2}&=&\frac{\partial \hat{u}}{\partial v^*}  \nonumber \\ &=& \frac{1}{-6 \alpha \hat{u}^2+2\alpha \beta-12\alpha \varphi^2+K} \nonumber \\ &>& \mathbf{0}.
\end{eqnarray}

Thus,

\begin{eqnarray}
\frac{\partial^2 J^*(v^*)}{\partial (v^*)^2}&=&-\frac{\partial^2 G^*(v^*)}{\partial (v^*)^2}+\frac{\partial^2 F^*(v^*)}{\partial (v^*)^2}  \nonumber \\ &=& -\frac{1}{-\gamma \nabla^2+K}+ \frac{1}{-6 \alpha \hat{u}^2+2\alpha \beta-12\alpha \varphi^2+K}\nonumber \\ &=&\frac{ -\gamma \nabla^2+6\alpha \hat{u}^2-2\alpha \beta+12\alpha \varphi^2}{(-\gamma\nabla^2+K)(-6 \alpha \hat{u}^2+2\alpha \beta-12\alpha \varphi^2+K)}
\nonumber \\ &=&\frac{\delta^2 J(\hat{u})+12 \alpha \varphi^2}{(-\gamma\nabla^2+K)(-6 \alpha \hat{u}^2+2\alpha \beta-12\alpha \varphi^2+K)}  \nonumber \\ &>& \mathbf{0}, \; \forall v^* \in D_1^*
\end{eqnarray}

From this we may infer that $J^*$ is convex on $D_1^*$.

\end{proof}
In the next lines we present our main result.

\begin{thm} Let $\hat{v}^* \in D^*_1$ be such that
$$\delta J^*(\hat{v}^*)= \mathbf{0}.$$

Assume either $$f(x)>0, \forall x \in \overline{\Omega}$$
or
$$f(x)<0, \forall x \in \overline{\Omega}.$$

Define $$A^+=\{u \in U_1\;:\; u\;f >0 \text{ in } \Omega\}$$
and
$$u_0=(-\gamma \nabla^2+K)^{-1} \hat{v}^*$$

Assume also $$u_0 \in A^+ \cap B^+$$
and recall that $$D_1^*=\left\{v^* \in D^* \;:\; \hat{u}=\frac{\partial F_1^*(v^*)}{\partial v^*} \in A^+\right\}.$$

Under such hypotheses

\begin{eqnarray}
J(u_0)&=& \inf_{u \in U_1} J(u) \nonumber \\ &=& \inf_{v^* \in D^*_1} J^*(v^*) \nonumber \\ &=& J^*(\hat{v}^*).
\end{eqnarray}

\end{thm}
\begin{proof}
From the last theorem $J^*$ is convex in $D_1^*$ so that
$$J^*(\hat{v}^*)=\inf_{v^* \in D_1^*} J^*(v^*).$$

Therefore,
\begin{eqnarray}
J^*(\hat{v}^*)&\leq& J^*(v^*) \nonumber \\ &=& -G^*(v^*)+F^*(v^*) \nonumber \\ &\leq&
-\langle u,v^* \rangle_{L^2}+G(u)+F_1^*(v^*)\; \forall u \in U_1, \; v^* \in D^*_1.
\end{eqnarray}
Hence
\begin{eqnarray}
J^*(\hat{v}^*)&\leq&
\inf_{v^* \in D_1^*}\{-\langle u,v^* \rangle_{L^2}+G(u)+F_1^*(v^*)\} \nonumber \\ &=& G(u)-F(u) \nonumber \\ &=& J(u),\; \forall u \in A^+.
\end{eqnarray}
Similarly as in the previous theorems proofs we may obtain
 \begin{equation}\label{uk10A}\inf_{u \in U_1} J(u)=\inf_{ u \in A^+}J(u)  \geq \inf_{v^* \in D_1^*}J^*(v^*).\end{equation}

On the other hand, also similarly as the proofs of the previous theorems we may obtain

$$\delta J(u_0)=\mathbf{0}$$ and $$J(u_0)=J^*(\hat{v}^*).$$

From this and (\ref{uk10A}) we may infer that
\begin{eqnarray}
J(u_0)&=& \inf_{u \in U_1} J(u) \nonumber \\ &=& \inf_{v^* \in D^*_1} J^*(v^*) \nonumber \\ &=& J^*(\hat{v}^*).
\end{eqnarray}
The proof is complete.
\end{proof}

\section{ A final dual variational formulation}

This final duality principle is summarized by the following theorem.

\begin{thm} Let $U,Y$ be a Banach spaces such that $Y=Y^*$ and let $\Lambda :U \rightarrow Y$ be a bounded linear operator.

Consider the functional $J:U \rightarrow \mathbb{R}$ expressed by
$$J(u)=G_K(\Lambda u)-F(\Lambda u)-\langle u,f \rangle_U,$$ where $G_K:Y \rightarrow \mathbb{R}$ is defined by
$G_K(\Lambda u)= G(\Lambda u)+\frac{K}{2}\langle \Lambda u,\Lambda u \rangle_Y$ where $G:Y \rightarrow \mathbb{R}$ is a  coercive,  Fr\'{e}chet differentiable and possibly non-convex functional. Moreover $f \in U^*$ and $F:Y \rightarrow \mathbb{R}$ is such that
$$F(\Lambda u)=\frac{K}{2}\langle \Lambda u,\Lambda u \rangle_Y,$$
so that $$J(u)=G(\Lambda u)-\langle u,f \rangle_U.$$
Assume $$\inf_{u \in U} J(u)= \alpha \in \mathbb{R}$$ and  $K>0$ is such that $G_K$ is convex.

Define the polar functionals $G^*_K:Y^* \rightarrow \mathbb{R}$ and $F^*:Y^* \rightarrow \mathbb{R}$ by

$$G_K^*(v^*+z^*)= \sup_{v \in Y}\{ \langle v,v^*+z^* \rangle_Y -G_K(v)\},$$
and
$$F^*(z^*)= \sup_{v \in Y}\{ \langle  v,z^* \rangle_Y -F(v)\},$$
respectively

Define also $A^*=\{v^* \in Y^* \::\; \Lambda^*v^*-f=\mathbf{0}\}$

$J^*(v^*,z^*)=-G^*_K(v^*+z^*)+F^*(z^*)$

Suppose $(u_0,v_0^*,z_0^*) \in U \times Y^* \times Y^*$ is such that

$$\delta (J^*(v_0^*,z^*_0)+\langle u_0, \Lambda^*v_0^*-f\rangle_U)=\mathbf{0}.$$
Under such hypotheses, we have 
$$\delta J(u_0)=\mathbf{0}$$ and
\begin{eqnarray}J(u_0)&=&\min_{u \in U}\left\{J(u) +\frac{K}{2} \langle \Lambda u -\Lambda u_0,\Lambda u-\Lambda u_0\rangle_Y\right\}\nonumber \\ &=& \sup_{v^* \in A^*}\left\{ J^*(v^*,z^*_0)\right\} \nonumber \\
&=& J^*(v_0^*,z_0^*).\end{eqnarray}

\end{thm}

\begin{proof}

Observe that from the variation of $J^*$ in $u$ we obtain $$\Lambda^* v_0^*-f=\mathbf{0}$$ so that $v_0^* \in A^*.$

Moreover from the variation of $J^*$ in $v^*$ we have
$$\frac{\partial G_K^*(v_0^*+z^*_0)}{\partial v^*}=\Lambda u_0.$$

Also, from the variation of $J^*$ in $z^*$ we have
$$- \frac{\partial G_K^*(v_0^*+z^*_0)}{\partial z^*}+\frac{\partial F^*(z_0^*)}{\partial z^*}=\mathbf{0}.$$
Therefore $$\Lambda u_0=\frac{\partial F^*(z_0^*)}{\partial z^*},$$
so that from the Legendre transform properties $$ z_0^*=\frac{\partial F(\Lambda u_0)}{\partial v}=K \Lambda u_0$$ where $v=\Lambda u.$ Hence,
$$F^*(z_0^*)=\langle \Lambda u_0,z_0^*\rangle -F(\Lambda u_0)=\frac{K}{2}\langle \Lambda u_0,\Lambda u_0 \rangle_Y.$$

Also from the Legendre transform properties we may obtain

$$v_0^*+z_0^*= \frac{\partial G_K(\Lambda u_0)}{\partial v},$$
 so that

\begin{eqnarray}v_0^*&=&\frac{\partial G_K(\Lambda u_0)}{\partial v}-z_0^*
\nonumber \\ &=&\frac{\partial G_K(\Lambda u_0)}{\partial v}-K\Lambda u_0 \nonumber \\ &=& \frac{\partial G(\Lambda u_0)}{\partial v}.
\end{eqnarray}

From this and $$\Lambda^*v_0^*-f=\mathbf{0}$$ we have

$$\Lambda^*\left(\frac{\partial G(\Lambda u_0)}{\partial v}\right)-f=\mathbf{0},$$
that is
$$\delta J(u_0)=\mathbf{0}.$$

Once more through the Legendre transform properties, we get

$$G_K^*(v_0^*+z_0^*)=\langle \Lambda u_0,v_0^*+z_0^* \rangle_Y-G_K(\Lambda u_0),$$
and
$$F^*(z_0^*)=\langle \Lambda u_0,z_0^* \rangle_Y-F(\Lambda u_0),$$
so that \begin{eqnarray}
J^*(v_0^*,z_0^*)&=& -G_K^*(v_0^*+z_0^*)+F^*(z_0^*) \nonumber \\ &=&
-\langle u_0,\Lambda^* v_0^*\rangle_U+G_K(\Lambda u_0)-F(\Lambda u_0) \nonumber \\ &=&
-\langle u_0,f\rangle_U+G(\Lambda u_0) \nonumber \\ &=& J(u_0).
\end{eqnarray}

Moreover, we have
\begin{eqnarray}\label{us33.1}
J^*(v_0^*,z_0^*)&=&\{-G_K^*(v^*_0)+F^*( z^*_0)\}\nonumber \\ &\leq&  -\langle \Lambda u,v_0^*+z_0^*\rangle_Y+G_K(\Lambda u)+F^*(z^*_0) \nonumber \\ &=& G(\Lambda u) +\frac{K}{2}\langle \Lambda u,\Lambda u\rangle_Y-\langle u,f \rangle_Y-\langle \Lambda u,z_0^*\rangle_Y +\frac{K}{2}\langle \Lambda u_0,\Lambda u_0 \rangle_Y \nonumber \\ &=&G(\Lambda u) -\langle u,f \rangle_U+\frac{K}{2}\langle \Lambda u,\Lambda u\rangle_Y-K \langle \Lambda u,\Lambda u_0\rangle_Y +\frac{K}{2} \langle \Lambda u_0,\Lambda u_0\rangle_Y
\nonumber \\ &=& G(\Lambda u)-\langle u,f\rangle_U+\frac{K}{2} \langle \Lambda u-\Lambda u_0,\Lambda u-\Lambda u_0 \rangle_{Y}\nonumber \\   &=& J(u)+\frac{K}{2} \langle \Lambda u-\Lambda u_0,\Lambda u-\Lambda u_0 \rangle_{Y},\; \forall u \in U.
\end{eqnarray}
Summarizing, we have got
\begin{equation}\label{us33.5}
J^*(v_0^*,z_0^*)\leq \inf_{u \in U} \left\{J(u)+\frac{K}{2} \langle \Lambda u-\Lambda u_0,\Lambda u-\Lambda u_0 \rangle_{Y}\right\}\end{equation}
Therefore, from  $$\delta J(u_0)=\mathbf{0},$$
$$J(u_0)=J^*(v_0,z_0^*),$$ from (\ref{us33.5}) and the concavity of $J^*$  in $v^*$,  we have
\begin{eqnarray}J(u_0)&=&\min_{u \in U}\left\{J(u)+\frac{K}{2} \langle \Lambda u-\Lambda u_0,\Lambda u-\Lambda u_0 \rangle_{Y}\right\} \nonumber \\ &=& \sup_{v^* \in A^*} J^*(v^*,z^*_0) \nonumber \\
&=& J^*(v_0^*,z_0^*).\end{eqnarray}
The proof is complete.
\end{proof}


\begin{thebibliography}{}
%
%
\bibitem{1}
R.A. Adams and J.F. Fournier, {Sobolev Spaces}, 2nd edn.
 (Elsevier, New York, 2003).

 \bibitem{100}
J.F. Annet, \textit{Superconductivity, Superfluids and Condensates}, 2nd edn.
 ( Oxford Master Series in
Condensed Matter Physics, Oxford University Press, Reprint, 2010)
\bibitem{85} W.R. Bielski and J.J. Telega, A Contribution to Contact Problems for a Class of Solids and Structures,
Arch. Mech., 37, 4-5, pp. 303-320, Warszawa 1985.

\bibitem{2900} W.R. Bielski, A. Galka, J.J. Telega, The Complementary Energy Principle and Duality for
Geometrically Nonlinear Elastic Shells. I. Simple case of moderate rotations around a tangent to the middle surface.
Bulletin of the Polish Academy of Sciences, Technical Sciences, Vol. 38, No. 7-9, 1988.

\bibitem{55}
D. Bohm, {Quantum Theory}
(Dover Publications INC., New York, 1989).


 \bibitem{120}
F.S. Botelho, {Functional Analysis and Applied Optimization in Banach Spaces},
 (Springer Switzerland, 2014).
\bibitem{700} F.S. Botelho, Functional Analysis, Calculus of Variations and Numerical Methods in Physics and Engineering, Taylor and Francis, Florida, 2020.
\bibitem{15} F. Botelho, {\it Dual Variational Formulations for a
Non-linear Model of Plates}, Journal of Convex Analysis, 17 , No.
1, 131-158 (2010).
\bibitem{360} F. Botelho, {\em Variational Convex Analysis}, Ph.D. thesis, Virginia Tech, Blacksburg, VA -USA, (2009).


\bibitem{[6]} I.Ekeland and R.Temam, {\it Convex Analysis and
Variational Problems.} North Holland (1976).

\bibitem{101}
L.D. Landau and E.M. Lifschits, {Course of Theoretical Physics, Vol. 5- Statistical Physics, part 1}.
(Butterworth-Heinemann, Elsevier, reprint 2008).
\bibitem{12} J.F. Toland, {\it A duality principle for non-convex
optimisation and the calculus of variations}, Arch. Rath. Mech.
Anal., {\bf 71}, No. 1 (1979), 41-61.




\end{thebibliography}
\end{document}